\newtheorem{thm}{Theorem}
\newtheorem{lemma}{Lemma}
\newtheorem{propo}{Proposition}
\newtheorem{coro}{Corollary}
\let\paragraph\subsection
\title{On the arboricity of manifolds}
\author{Oliver Knill}
\date{October 22, 2023}
\address{Department of Mathematics \\ Harvard University \\ Cambridge,
MA, 02138 }
\subjclass{}
\keywords{Arboricity, planar graph, discrete manifolds}
\begin{document}

\begin{abstract}
The arboricity of a discrete 2-sphere is always 3. The arboricity of
any other discrete 2-dimensional surface is always 4.
For $d$-manifolds of dimension larger than 2, the
arboricity can be arbitrary large and must be larger than $d$.
\end{abstract}
\maketitle

\section{Introduction}

\paragraph{}
A finite simple graph $G$ is a $d$-manifold if every {\bf unit sphere}
$S(v)$
is $(d-1)$-sphere. It is a {\bf $d$-sphere}, if $G-v$ is contractible
for some
$v$. A graph $G$ is {\bf contractible} if both $S(v)$ and $G-v$ are
contractible. These inductive assumptions start with the empty
graph $0$ being the $(-1)$-sphere and the $1$-point graph $K_1=1$
being contractible.

\paragraph{}
The {\bf arboricity} of graph $G$ is the minimal number of forests
that are needed to partition the edge set of $G$. It is determined
by the maximum of $|E_W|/(|W|-1)$ where $(W,E_W)$ runs over all
induced subgraphs of $G$ with more than one element. The {\bf
Nash-Williams theorem}
assures that ${\rm arb}(G)$ is the smallest integer larger or equal than
this number.

\paragraph{}
For $d$-manifolds, the vertex degree ${\rm deg}(v)$ is larger or equal
than $2d$
because this is the vertex cardinality of the smallest $(d-1)$-sphere.
By the {\bf Euler handshake formula}, $2E/V = \sum_{v} {\rm deg}((v)/V
\geq 2d$, meaning
$E/V \geq d$ and $E/(V-1)>d$ so that the arboricity is at least $d+1$.
The case, where $G$ is a cross polytop, the smallest $d$-sphere, shows
that this
lower bound can indeed happen, at least for spheres.

\paragraph{}
As we will show here, in the cases $d=1$ and $d=2$, we understand
everything: in the case $d=1$, where a manifold a disjoint union of
circular
graphs, we always have arboricity $2$.
In dimension $d=2$, spheres have arboricity $3$ and all other 2
dimensional surfaces
have arboricity $4$. In higher dimensions, the arboricity of $d$-spheres
can take any value in $\{ d+1,d+2, \dots, \}$. For any $d$-manifold
type which is not a sphere, there are examples with arbitrary high
arboricity. We have no information yet about the lower bounds for
manifold
types different from spheres. Already the case of a $d$-torus is open
for $d>2$.
We have seen $3$ tori of arboricity $5$ but no example of arboricity $4$
yet.

\begin{thm}
For a $d$-manifold the arboricity is larger or equal than $d+1$.
For all $d$, there are $d$-spheres with arboricity $d+1$.
For any manifold type and dimension larger than $2$,
there are examples with arbitrary large arboricity. For 2-spheres
the arboricity is $3$ for all other 2-manifold the arboricity is $4$.
\end{thm}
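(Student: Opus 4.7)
The plan is to treat the four assertions of the theorem in order, reusing the lower bound calculation from the introduction and supplying matching constructions. For the lower bound ${\rm arb}(G) \geq d+1$: every unit sphere of a $d$-manifold is a $(d-1)$-sphere, hence contains at least $2d$ vertices, forcing ${\rm deg}(v) \geq 2d$; the handshake formula then gives $|E|/(|V|-1) > d$, and Nash--Williams yields the bound. To realise equality I would take the cross polytope $C_d = S^0 * S^0 * \cdots * S^0$ with $d+1$ factors. This is the complete multipartite graph $K_{2,2,\ldots,2}$ with $|V| = 2(d+1)$ and $|E| = 2d(d+1)$, so that $|E|/(|V|-1) = 2d(d+1)/(2d+1) < d+1$. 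Every induced subgraph of $C_d$ is again complete multipartite with parts of size at most two, and a short case check shows $|E_W|/(|W|-1) < d+1$ uniformly, so Nash--Williams gives ${\rm arb}(C_d) = d+1$.

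For arbitrarily large arboricity in dimension $d \geq 3$, I would use the cyclic polytope $C(n,d+1)$: its boundary is a simplicial $d$-sphere and, since $d+1 \geq 4$, is at least $2$-neighborly, so its $1$-skeleton is the complete graph $K_n$, whose Nash--Williams density is $n/2$. Hence the arboricity grows without bound as $n \to \infty$. For a general $d$-manifold type $M$ with $d \geq 3$, a connected sum $M \# S$ with a dense sphere $S$ as above preserves the topological type of $M$ while importing most of the $K_n$ as an induced subgraph, so ${\rm arb}(M \# S)$ also grows without bound.

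For the two-dimensional case, a $2$-sphere has planar $1$-skeleton; any induced planar subgraph satisfies $|E_W| \leq 3|W|-6$, so $|E_W|/(|W|-1) < 3$, giving ${\rm arb} \leq 3$ and hence $=3$ combined with the universal lower bound. For a non-sphere $2$-surface, the triangulation identity $|E| = 3|V| - 3\chi$ with $\chi \leq 0$ forces $|E|/(|V|-1) > 3$, so ${\rm arb} \geq 4$. The matching upper bound ${\rm arb} \leq 4$ reduces to checking $|E_W|/(|W|-1) \leq 4$ for every induced $W$; one embeds $W$ in the same surface and uses $|E_W| \leq 3|W| - 3\chi$, which is sufficient once $|W|$ is large relative to the genus. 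The main obstacle I foresee is the regime where $|W|$ is small relative to the genus, in which one must exploit that $W$ is an induced subgraph of a triangulation whose links are cycles, not an arbitrary graph on the surface, to rule out unusually dense configurations. I expect this last case analysis to be the hardest step.
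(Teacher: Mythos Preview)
Your treatment of the lower bound and of the cross polytope example matches the paper. The serious gap is in the third assertion. The boundary of the cyclic polytope $C(n,d+1)$ is a simplicial $d$-sphere in the classical sense, but it is \emph{not} a $d$-manifold in the sense of this paper. Here a $d$-manifold is a \emph{graph} whose unit spheres $S(v)$ (the induced subgraphs on the neighbours of $v$) are themselves $(d-1)$-spheres; equivalently, the clique complex of the graph must be a combinatorial $d$-manifold. Since for $d\ge 3$ the $1$-skeleton of $\partial C(n,d+1)$ is $K_n$, every unit sphere is $K_{n-1}$, whose clique complex is the full $(n-2)$-simplex rather than a $(d-1)$-sphere. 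Your proposed high-arboricity examples therefore fall outside the class the theorem is about, and the connected-sum argument built on them collapses with it. The paper obtains large arboricity by a different, intrinsic construction: inside a genuine combinatorial $3$-manifold one first arranges an edge $(a,b)$ whose dual circle $S(a)\cap S(b)$ has length $k$ as large as desired, and then repeatedly subdivides that edge. Each such refinement adds one vertex and $k+1$ edges while keeping the graph a $3$-manifold of the same topological type, so $|E|/|V|$ is driven toward $k+1$.

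In the two-dimensional part you write ``$\chi\le 0$'' for non-spheres, which omits the projective plane ($\chi=1$); there the whole graph has $|E|/(|V|-1)=3$ exactly, so the lower bound ${\rm arb}\ge 4$ needs a separate argument and cannot be read off from the global Nash--Williams ratio alone. For the upper bound ${\rm arb}\le 4$ your plan is in the right spirit, and you correctly isolate the difficult regime of small $W$. The paper does not bound $|E_W|$ directly from an embedding as you propose; instead it argues that a minimal offending induced subgraph must be leafless, retriangulates its polygonal faces to obtain a genuine $2$-manifold of no larger genus with at least the same Nash--Williams ratio, and then rules out $\phi\ge 4$ on all $2$-manifolds by combining $\phi(G)=3F/(2X-2+F)$ with explicit lower bounds on the area $F$ coming from Gauss--Bonnet and short-loop (systole-type) considerations.
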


\section{Three forests suffice}

\paragraph{}
The {\bf arboricity} ${\rm arb}(G)$ of a finite simple graph $G=(V,E)$
is the minimal number of forests partitioning the edge set $E$.
A {\bf 2-manifold} is a finite simple graph such that every {\bf unit
sphere} $S(v)$,
the graph induced by all direct neighbors of $v$, is a cyclic graph with
$4$ or
more vertices. We simply call a {\bf 2-manifold} or a {\bf surface}.
For a connected graph, the arboricity is the
same than the minimal number of trees covering $E$.
By the {\bf Nash-Williams theorem} \cite{NashWilliams1964}, ${\rm
arb}(G)$ is the smallest integer
$k$ such that $E' \leq k (V'-1)$ for any induced sub-graph $(V',E')$
with a vertex
subset $V' \subset V$ of more than one element.
 \footnote{We denote the cardinalities of sets with the name
of the set itself.} The {\bf Euler handshake formula} $2E=\sum_{v \in V}
{\rm deg}(v)$ shows
that the arboricity is close to half the maximal average {\bf vertex
degree} ${\rm deg}(v)$
which an induced sub-graph $G'$ of $G$ can have.

\paragraph{}
The {\bf Nash-Williams functional} $\phi(G)=E/(V-1)=\sum_{v \in V} {\rm
deg}(v)/(2V-2)$ is
defined for all graphs with $V \geq 2$ vertices. Again using notation
overload, the
{\bf area} of the graph is denoted by $F$, as is the set of triangles.
The {\bf Euler characteristic} of a $K_4$ graph is defined
as $X=V-E+F$. A $2$-manifold satisfies the {\bf Dehn-Sommerville}
relation
$3F=2E$ because every edge $(a,b)$ is contained in exactly 2 triangles
$abc,abd$, where
$\{c,d\} = S(a) \cap S(b)$ and every triangle contains exactly 3 edges.
For surfaces, the functionals $V,E,F$ and so $\phi,X$ are
spectral properties for the Kirchhoff Laplacian $L$ because
$V={\rm tr}(L^0),E={\rm tr}(L)/2,F={\rm tr}(L)/3,X={\rm tr}(L^0-L^1/6)$.

\begin{lemma}
A surface $G$ of area $F$ and Euler characteristic $X$ has the
Nash-Williams
functional $\phi(G)=3F/(X-1+F)$.
\end{lemma}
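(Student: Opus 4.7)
The plan is to reduce the Nash-Williams functional $\phi(G) = E/(V-1)$ to an expression involving only $F$ and $X$ by eliminating the vertex count $V$ and the edge count $E$. For a $2$-manifold, two linear identities between $V$, $E$, $F$, $X$ are already available in the excerpt: the Dehn-Sommerville relation $3F = 2E$, which encodes that every edge of a surface lies in exactly two triangles, and the definition $X = V - E + F$ of the Euler characteristic. Using these two identities, any two of the four quantities $V,E,F,X$ determine the other two, so in particular both $E$ and $V-1$ can be rewritten using only $F$ and $X$.

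In terms of steps, I would first solve the Dehn-Sommerville identity for $E$ to get $E = 3F/2$. Next, I would rearrange $X = V - E + F$ into $V = X + E - F$ and substitute the previous expression to obtain $V = X + F/2$, so that $V - 1 = X - 1 + F/2$. Finally, I would plug both $E = 3F/2$ and $V - 1 = X - 1 + F/2$ into $\phi(G) = E/(V-1)$ and simplify the resulting fraction to reach the stated closed form. This substitution-and-simplification is the entirety of the argument; no case analysis, induction, or inequality manipulation is needed.

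There is no serious obstacle, but the one point that needs care is the appeal to $3F = 2E$, since this is the only step that uses the $2$-manifold hypothesis rather than just that $G$ is a $2$-dimensional simplicial complex. If the manifold assumption were weakened so that edges could lie in a variable number of triangles, then $F$ and $E$ would cease to be proportional and $\phi(G)$ would no longer be a function of $F$ and $X$ alone. Thus the real content of the lemma is the combinatorial rigidity that the manifold assumption imposes on $(V,E,F)$, which is what will later allow the arboricity of a surface to be read off from its Euler characteristic in the theorem above.
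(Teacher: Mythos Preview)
Your proposal is correct and mirrors the paper's own proof exactly: both simply solve the linear system $3F=2E$, $X=V-E+F$ for $(V,E)=(X+F/2,\,3F/2)$ and substitute into $\phi(G)=E/(V-1)$. Note only that if you actually carry out your last step you obtain $\phi(G)=(3F/2)/(X-1+F/2)=3F/(2X-2+F)$, which is the form the paper itself uses in all subsequent computations (e.g.\ $3F/(2+F)$ for $X=2$ and $3F/(2X+F-2)$ in Proposition~1); the denominator $X-1+F$ printed in the lemma is a typo for $2(X-1)+F$.
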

\begin{proof}
We use the linear equations $3F=2E$ and $X=V-E+F$ and choose $F$ and $X$
as free variables. The solution is the {\bf $f$-vector}
$(V,E,F)= (X+F/2,3F/2,F)$.
\end{proof}

\paragraph{}
The following result is exercise 21.4.6 in \cite{BM} and some sort of
``folklore".
It generalizes as the {\bf Edmond covering theorem} in matroid theory
\cite{Ruohonen} and has been used as a
tool to show that the star arboricity is less or equal to $6$
\cite{AlgorAlon1989}. We deduced it earlier from having
acyclic chromatic number 4 for all non-prismatic
$2$-spheres and using that for an acyclic coloring with $c$ colors,
all the $c(c-1)/2$ Kempe chains are forests and that if $c$ is even,
one can form groups of $c/2$ chains as one forest producing $c-1$
forests.

\begin{thm}[Algor-Alon]
The arboricity of a planar graph is 3 or less.
\end{thm}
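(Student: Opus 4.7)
The plan is to combine the Nash-Williams criterion recalled in the introduction with the classical edge bound for planar graphs coming from Euler's formula. By Nash-Williams, ${\rm arb}(G) \leq 3$ if and only if every induced subgraph $(W,E_W)$ with $|W| \geq 2$ satisfies $|E_W| \leq 3(|W|-1)$. Since the class of planar graphs is closed under taking induced subgraphs (any planar embedding of $G$ restricts to a planar embedding of an induced subgraph), it suffices to verify the inequality $|E| \leq 3(|V|-1)$ for an arbitrary finite simple planar graph $H=(V,E)$ with $|V|\geq 2$.

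The micro-case $|V|=2$ is immediate because a simple graph on two vertices has at most one edge, so $|E|\leq 1 \leq 3 = 3(|V|-1)$. For $|V|\geq 3$ I would invoke Euler's inequality $V - E + F \geq 2$, with equality in the connected case, together with the face-degree count $3F \leq 2E$, which holds because every face of a simple planar embedding is bounded by at least three edges while each edge belongs to at most two faces. Eliminating $F$ gives the familiar bound $|E| \leq 3|V|-6$, and since $3|V|-6 < 3(|V|-1)$, the Nash-Williams condition is satisfied with $k=3$.

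I do not anticipate any serious obstacle: the entire argument distills down to the hereditary nature of planarity, the Nash-Williams functional criterion, and Euler's formula. The only mild nuisance worth addressing explicitly is the disconnected case, where the generalized Euler relation $V - E + F = 1 + c$ (with $c$ counting components) still yields $|E| \leq 3|V|-6$ for $|V|\geq 3$; alternatively one can treat each component separately, noting that components which are trees satisfy the even sharper bound $|E| \leq |V|-1$. With this handled, the inequality $\phi(H) = |E|/(|V|-1) < 3$ holds for every induced subgraph of at least two vertices, and the theorem follows from Nash-Williams.
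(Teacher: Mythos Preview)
Your argument is correct and is in fact the standard textbook route: reduce via Nash--Williams to the hereditary bound $|E|\le 3(|V|-1)$, then get this from Euler's formula and the face--degree inequality $3F\le 2E$. One tiny caveat worth making explicit is that $3F\le 2E$ can fail for very sparse graphs (e.g.\ a single edge, where the unique face has boundary walk of length $2$); but in those cases $|E|\le 2$ and the bound $|E|\le 3|V|-6$ is trivial, so nothing is lost. With the $|V|=2$ case handled separately as you do, the proof is complete.

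The paper takes a more structural and less direct path. Rather than invoking the raw Euler bound $E\le 3V-6$, it first strips leaves, then \emph{adds} edges to reach a maximally planar graph (which only increases $\phi$), and then appeals to Whitney's theorem (Lemma~\ref{whitneylemma}) to identify the $4$-connected pieces as either $K_4$ or combinatorial $2$-spheres. For a $2$-sphere it computes $\phi(G)=3F/(2+F)$ via the Dehn--Sommerville relation $3F=2E$ and $V-E+F=2$, and observes this is strictly below $3$. Your approach is shorter and needs no structure theory; the paper's approach is chosen because it feeds directly into the surrounding material --- the formula $\phi(G)=3F/(X-1+F)$ of Lemma~1, the reduction principle in Corollary~1, and the later analysis of non-spherical surfaces --- where the relevant extremal objects really are the triangulated $2$-manifolds rather than arbitrary planar graphs.
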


\begin{proof}
Given a planar graph $G=(V,E)$. Because the arboricity of a disjoint
union of graphs
is the maximum of the arboricity of each component, we can assume that
$G$ is connected.
Because removing a leaf (a $v \in G$ with vertex degree ${\rm
deg}(v)=1$)
does not change the arboricity, we
can also assume that $G$ has no leafs. A leafless planar graph can be
triangulated by
adding edges to make it {\bf maximally planar} meaning that adding an
other edge would
no more have it planar. Adding edges without changing the set of
vertices
increases the Nash-Williams functional $\phi(G)=E/(V-1)$. In other
words, $\phi$ as a
function on induced sub-graphs without leafs takes for surfaces its
maximum on $G$.
Every 4-connected component of a maximal planar graph is by a result of
Whitney
\cite{Whitney1931} (see Appendix) either $K_4$ or a triangulation of a
$2$-sphere,
a surface of Euler characteristic 2. The graph $K_4$ can be covered with
3 forests.
That a $2$-sphere can not have arboricity $2$ follows from the fact that
$3F=2E, V-E+F=2$ implies $V-1=1+F/2$ and so
$$   \phi(G)=E/(V-1)=(2E)/(2F)=3F/(2+F) \; . $$
Setting the ratio $\phi(G)$ to $2$ gives $3F=4+2F$, so that $F \leq 4$,
which is
an impossibility for a $2$-sphere. Setting the ratio $\phi(G)=3$ gives
$F/(2+F)=1$
which is not possible for any $F$.
\end{proof}

\paragraph{}
This proof shows that any subgraph $G$ of a $2$-manifold
for which all leafs have been recursively removed, has the property that
any polygon face (a locally minimal polygon) can be completed. This then
leads to a 2-manifold. The arboricity is smaller or equal than the
arboricity
of the full triangulation. This means that if we have a global bound
$\phi(G) \leq c$ for all manifolds of a specific topological type, then
$\phi(G) \leq c$ for all sub-graphs of such manifolds. More generally,
for
any surface $\phi(H)$ as a functional on this class of sub-graphs takes
the
{\bf global maximum} on $G$. We have never seen an example, where an
induced subgraph $H$ of $G$ has $\phi(H)>\phi(G)$.

\paragraph{}
Let us look a bit more at the functional $\phi(G)=E/(V-1)$. The
Nash-Williams theorem states that the maximum over all $\phi(W,E_W)$
with induced subgraphs $(W,E_W)$ determines the arboricity. For
$2$-manifolds, already the single number $\phi(G)$ determines the
arboricity.
Lets us formulate this and also note that we do not know whether this
holds in higher
dimensions. The use of {\bf critical graphs}, minimal graphs for which
the arboricity
is larger than expected has been used already by Nash-Williams.

\begin{coro}[$\phi$ on $G$ determines]
a) All 2-spheres have arboricity $3$ if and only if $2 \leq \phi(G)<3$
   for all 2-spheres. \\
b) All 2-manifold different from spheres have arboricity $4$ if and only
   if $3 \leq \phi(G)<4$ for all such 2-manifolds.
\end{coro}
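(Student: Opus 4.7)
I would reduce both parts to explicit inequalities on $F$ and $X$ via Lemma~1, then combine with the Nash--Williams lower bound and the matching arboricity upper bound (Theorem~2 for spheres, the ``four forests suffice'' half of Theorem~1 for non-spheres).

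\emph{Part (a).} For a 2-sphere $X=2$, so Lemma~1 gives $\phi(G)=3F/(F+1)$. This automatically lies in $[8/3,3)$: the upper bound $3F/(F+1)<3$ holds because $3F<3(F+1)$, and $\phi(G)\geq 8/3$ follows from $F\geq 8$ (the octahedron being the smallest 2-sphere). In particular $2\leq \phi(G)<3$ is automatic, so the right-hand side of (a) is universally true. For the left-hand side, Nash--Williams gives arb$(G)\geq \lceil\phi(G)\rceil\geq 3$ because $\phi(G)>2$ strictly, while Theorem~2 gives arb$(G)\leq 3$ since 2-spheres are planar; hence arb$(G)=3$ always. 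Both universal statements in (a) are thus true and so equivalent.

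\emph{Part (b).} For a non-sphere 2-manifold $X\leq 0$, and Lemma~1 gives $\phi(G)=3F/(X-1+F)>3$ automatically (the numerator exceeds three times the denominator). The real content is $\phi(G)<4$, which rearranges to $F>4-4X$, or equivalently, using $F=2(V-X)$, to $V>2-X$. Granted this, Nash--Williams forces arb$(G)\geq 4$, while the ``four forests suffice'' part of Theorem~1 gives the matching upper bound arb$(G)\leq 4$, so arb$(G)=4$ universally; again both sides of (b) hold for every such $G$ and so are equivalent.

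\emph{Main obstacle.} Establishing $V>2-X$ for every non-sphere 2-manifold, especially at high genus. The elementary bound from $S(v)=C_n$ with $n\geq 4$ (which gives $3F\geq 4V$, i.e.\ $F\geq 4X$) is informative only when $X>0$. For higher genus one needs that the strict combinatorial-manifold requirement $S(v)\simeq C_n$ forces triangulations to be larger than the raw Heawood minimum: for instance the genus-$6$ embedding of $K_{12}$ is \emph{not} a 2-manifold in this paper's sense because the unit sphere $K_{11}$ is not a cycle. A secondary subtlety is that Nash--Williams a priori maximizes $\phi$ over all induced subgraphs, not just over $G$; the leaf-removal-and-face-completion argument in the paragraph preceding the corollary reduces this to $\phi(G)$ alone, and I would appeal to it rather than reprove it here.
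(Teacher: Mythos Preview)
Your framing misses the logical role of the corollary. In the paper, Corollary~1 is a \emph{reduction}: it shows that the universal statement ``$\phi(G)<3$ (resp.\ $<4$) for all such manifolds'' \emph{implies} the universal arboricity statement, via a self-contained critical-subgraph argument. Concretely, the paper assumes (toward contradiction) a manifold with too-large arboricity, passes to a minimal induced subgraph $H$ with $\phi(H)\geq 3$ (resp.\ $\geq 4$), strips leaves, and retriangulates the polygonal faces; the result is again a 2-manifold (of equal or smaller genus) on the same vertex set but with $\phi$ only larger, contradicting the hypothesis. Neither the truth of $\phi(G)<4$ nor the truth of ``arboricity $\leq 4$'' is assumed.

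Your plan for part (b), by contrast, is to verify each side of the biconditional separately and then declare them equivalent. That is circular: you invoke ``the `four forests suffice' half of Theorem~1'' for the upper bound $\mathrm{arb}(G)\leq 4$, but in the paper that very bound is \emph{proven} by combining Corollary~1(b) with the later estimate $\phi(G)<4$ (Proposition~1 and Corollary~5). So you are using the consequence of the corollary to establish the corollary. Your ``secondary subtlety'' paragraph gestures at the right fix (leaf-removal and face-completion), but that argument \emph{is} the paper's proof of the backward direction, not an auxiliary remark; once you grant it, there is nothing left to do and the detour through Theorem~1 is both unnecessary and illegitimate at this point in the paper.

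Two smaller points. First, in (b) you write ``for a non-sphere 2-manifold $X\leq 0$'', omitting the projective plane $X=1$; there $\phi(G)=3$ exactly, a case the paper treats separately. Second, for (a) your approach does work without circularity (Theorem~2 precedes the corollary), but it proves something weaker than what the paper's argument gives: the paper establishes the implication $\phi<3\Rightarrow\mathrm{arb}=3$ directly, whereas you deduce equivalence only because both sides happen to be theorems.
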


\begin{proof}
a) As we have seen, that arboricity $3$ implies $2 \leq E/(V-1)<3$ is
a direct consequence of the Nash-Williams theorem.
That checking $E/(V-1) <3$ is enough, follows from the fact that we
can look at the smallest subgraph $G_W=(W,E_W)$ generated by $W$ for
which
$E_W/(W-1) \geq 3$ and so arboricity being equal to $4$. This critical
$G$
can not have leaves as otherwise, we could take away a leaf and
have no minimality. Now we have polygonal faces.
Triangulate those by adding edges.
This can only increase the ratio. But then we end up with a
2-manifold or $K_4$.  \\
b) Take the 2-manifold $G$ of smallest genus for which we have failure
and $\phi(G)=4$.  Then, take the smallest induced subgraph $H_W$ for
which the Nash-Williams bound fails. This graph must be leafless.
Now add edges until we have a triangulation.
That triangulation however either is a 2-manifold  with $E/(V-1) \geq 4$
or a manifold of smaller degree with $E/(V-1) \geq 4$
which was excluded by the minimality on the degree.
\end{proof} .

\paragraph{}
We derived the planarity result first
from a result about acyclic chromatic number of 2-spheres which relies
on
the 4-color theorem.
We learned that from Chat GPT who did not provide reference that the
result must
be known. A literature search then led to the exercise in \cite{BM}.
{\bf ``Google Bard"} gave us, when pressed, as reference
\cite{NashWilliams1964}
which however does not mention this result on planar graphs.
Antropic's ``Claude" gave the following proof:
{\it removing an edge reduces the number of edges and faces by one
(which is not true for $K_2$)
and claims that removing $E - (V-2)$ edges to get a forest leading to $E
\leq 3(V - 2)$
(also not true for $K_2$).} So far, we have not found a reference which
states and
proves the result in detail and so can label it as a ``folklore result".
\cite{AlgorAlon1989} states the $E \leq 3(V - 2)$ bound too, which
however fails for
$K_2$. The Algor-Alon article however is the first we could locate,
where the result
is stated and proven in a way that a reader can fill the details.

\section{Surfaces}

\paragraph{}
A finite simple graph for which every unit sphere $S(v)$ is a circular
graph with $4$ or more elements, is a {\bf two-dimensional discrete
manifold}.
We simply call this also a {\bf surface}. If $(a,b)$ is an edge, then
$S(a) \cap S(b)$
is a unit sphere in a cyclic graph and so a zero-dimensional sphere (a
graph with 2
vertices $(c,d)$ and no edge). There are now two triangles $(abc)$ and
$(abd)$.

\paragraph{}
The Euler characteristic of a surface is $X=V-E+F$, where $F$ is the
number of {\bf triangles},
$K_3$ sub-graphs of $G$. A connected surface of Euler characteristic is
2 is called
a {\bf 2-sphere}. The class of 2-spheres together with $K_4$ are known
by Whitney
to agree with the class of maximal planar graphs that are 4-connected
(see Appendix).
The graph $K_4$ is the only planar graph which is 4-connected and
contains $K_4$.
Any connected surface which is planar and has Euler characteristic $2$,
must be a sphere.

\paragraph{}
The main result in this note is:

\begin{thm}[Surface arboricity]
The $2$-sphere has arboricity 3. All other surfaces have arboricity $4$.
\end{thm}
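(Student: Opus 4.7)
The plan is to split into the 2-sphere case and the non-sphere case and, in each, translate the arboricity question into an inequality on the Nash--Williams functional $\phi(G)=E/(V-1)$ by combining Lemma 1 with Corollary 1.

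For a 2-sphere the two inequalities are already at hand. For the upper bound $\mathrm{arb}(G)\le 3$ I would invoke the Algor--Alon theorem (Theorem 2), since by the Whitney result recalled in the Appendix every 2-sphere is a $4$-connected maximal planar graph. For the matching lower bound I would apply Lemma 1 at $X=2$, which gives $\phi(G)=3F/(1+F)$; this is strictly increasing in $F$ and already exceeds $2$ on the octahedron ($F=8$), so by the Nash--Williams theorem $\mathrm{arb}(G)\ge 3$ for every 2-sphere.

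For non-sphere surfaces I would use Corollary 1(b) to reduce the theorem to the double inequality $3\le \phi(G)<4$. The left inequality follows from Lemma 1 for all topological types with $X\le 1$, with equality occurring only in the boundary case $X=1$ of the projective plane; there I would employ the critical-subgraph construction used in the proof of Corollary 1 to produce an induced subgraph whose ratio $E'/(V'-1)$ strictly exceeds $3$. The right inequality $\phi(G)<4$, after substituting Lemma 1, amounts to a lower bound on $F$ in terms of $X$; via the identities $(V,E,F)=(X+F/2,3F/2,F)$ together with the unit-sphere condition $\deg(v)\ge 4$ (every $S(v)$ is a cycle with at least four vertices), this can be rephrased as a lower bound on $V$, and therefore on $F$, that grows with $|X|$ and eventually forces the inequality.

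The step I expect to be hardest is precisely this upper bound for topologies of very negative Euler characteristic, where the required estimate on $F$ is tight and depends sensitively on which Whitney triangulations are admissible for a given topology. A secondary subtlety is that Corollary 1 converts $\mathrm{arb}(G)\le 4$ into $\phi(G)<4$ only under the premise that no induced subgraph $H\subset G$ can achieve $\phi(H)>\phi(G)$, a property flagged in the excerpt as empirical. I would handle it exactly as the Corollary does: start from a minimal violating induced subgraph, delete leaves (which preserves both the arboricity and the minimality), then triangulate each polygonal face. This operation only increases $\phi$ and lands back in the category of $2$-manifolds, where an induction on the Euler characteristic, or the direct estimate sketched above, closes the argument.
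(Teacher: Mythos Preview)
Your overall architecture matches the paper's: reduce via Corollary~1 to bounding $\phi(G)$ on the manifold itself, use Algor--Alon for the sphere upper bound, and compute directly from the $f$-vector identities for the lower bounds. The decisive step, as you correctly isolate, is the inequality $\phi(G)<4$ for $X\le 0$, which after the identities $(V,E,F)=(X+F/2,3F/2,F)$ is equivalent to the area bound $F>8-8X$.

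Here your sketch has a genuine gap. The only tool you invoke for this bound is the unit-sphere condition $\deg(v)\ge 4$. Feeding that into the handshake formula and the identities gives $3F=2E\ge 4V=4X+2F$, i.e.\ $F\ge 4X$, which is vacuous for $X\le 0$. A local degree bound cannot by itself produce a lower bound on $F$ that grows with $|X|$: a single vertex of huge degree can absorb arbitrarily much negative curvature without forcing many faces. The paper obtains the needed estimate (its Corollary~4) from genuinely \emph{topological} input---Gauss--Bonnet plus the existence of at least $b_1\ge 1-X$ independent non-contractible closed curves, each of length $\ge 4$, yielding roughly $V\ge 6-6X$ and hence $F=2(V-X)\ge 12-14X>8-8X$. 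Some argument of this flavour (counting short generators of $\pi_1$, or a systolic inequality) is essential and is absent from your outline.

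A smaller point: your treatment of $X=1$ runs in the wrong direction. You correctly note that $\phi(G)=3$ exactly there, and propose to use the critical-subgraph machinery of Corollary~1 to exhibit an induced $H$ with $\phi(H)>3$. But that machinery goes the other way: it takes a hypothetical violating subgraph and completes it to a violating manifold; it does not manufacture a subgraph with larger ratio from the manifold. The paper simply reads off ``$\phi(G)=3$, hence arboricity $\ge 4$'' at this point; whatever one thinks of that step, your proposed substitute does not supply it.
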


\paragraph{}
In general, if $G$ is a graph and $H$ is a sub-graph, then the
arboricity
of $H$ is smaller or equal than the arboricity of $G$
because a sub-graph of a forest is again a forest. The obvious
monotonicity
also follows from the Nash-Williams formula. In general however, the
Nash-Williams
ratio for an induced sub-graph $H$ could be larger for a sub-graph. For
example, if
we remove a single isolated vertex, then the ratio goes up.
The number $E/(V-1)$ alone does not determine the
arboricity of $G=(V,E)$ in general, even so we suspect that it does so
for manifolds.

\paragraph{}
Let $F$ denote the set of triangles.
\footnote{Again write the cardinalities of $V,E,F$ without the absolute
signs. }
The Euler characteristic $X=V-E+F=2$ and the Dehn-Sommerville equation
$3F=2E$
show that the {\bf $f$-vector} $(f_0,f_1,f_2) = (V,E,F)$ encoding the
cardinalities of
all simplices in $G$ is determined by the {\bf area} $F$ alone:

\begin{lemma}
For a 2-manifold with Euler characteristic $X$, we have
$(V,E,F)  = (X+\frac{F}{2},\frac{3F}{2},F)$.
\end{lemma}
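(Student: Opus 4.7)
The plan is to treat this as a direct linear-algebra computation in the three unknowns $V,E,F$, using the two structural identities available for a closed $2$-manifold: the definition of Euler characteristic and the Dehn--Sommerville relation. Both have already been recalled in the excerpt, so no new geometric input is required.

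First I would write down the two equations explicitly: $V - E + F = X$ (definition of $X$) and $2E = 3F$ (Dehn--Sommerville, which was justified earlier by the observation that every edge lies in exactly two triangles and every triangle has exactly three edges, so double-counting incidences gives $3F = 2E$). This is a linear system of two equations in three unknowns, so I would single out $F$ and $X$ as free parameters and solve for $E$ and $V$ in terms of them.

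Second, from $2E = 3F$ I would immediately read off $E = 3F/2$. Substituting this into $V - E + F = X$ gives $V = X + E - F = X + 3F/2 - F = X + F/2$. This yields the claimed $f$-vector $(V,E,F) = (X + F/2,\, 3F/2,\, F)$.

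There is no real obstacle here; the only thing to check is that the derivation is consistent with integrality, i.e.\ that $F$ must be even (so that $F/2$ and $3F/2$ are integers). This follows automatically from $3F = 2E$: since $3F$ is even and $3$ is odd, $F$ itself must be even. I would mention this parenthetically for completeness but the lemma as stated does not actually require it. The proof is essentially the same as the one already sketched for the preceding lemma, specialized to arbitrary Euler characteristic $X$ rather than only the sphere.
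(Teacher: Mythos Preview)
Your proof is correct and follows exactly the same route as the paper: use the Dehn--Sommerville relation $2E=3F$ to get $E=3F/2$, then substitute into the Euler formula $V-E+F=X$ to get $V=X+F/2$. The paper's own proof is a one-line reference to these two identities, so your write-up is simply a more detailed version of the same argument.
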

\begin{proof}
The second equation is the Dehn-Sommerville $2E=3F$, the first is the
Euler gem
formula.
\end{proof}

\begin{coro}[Algor-Alon]
If $G$ is a 2-sphere, then $E<3V-6$ and so $E/(V-1)<3$.
\end{coro}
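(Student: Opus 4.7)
The plan is to apply the preceding $f$-vector lemma with Euler characteristic $X = 2$. That lemma gives $(V,E,F) = (X + F/2,\ 3F/2,\ F)$, so for a 2-sphere we immediately have $V = 2 + F/2$ and $E = 3F/2$, with $F$ a free parameter (and $F \geq 4$, realized by $K_4$).

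For the edge bound, direct substitution gives $3V - 6 = 3(2 + F/2) - 6 = 3F/2 = E$, which is the classical maximal-planar identity $E = 3V - 6$. (The ``$<$'' in the statement has to be read as ``$\leq$,'' since equality is in fact attained on every triangulated 2-sphere.) Feeding this into the Nash-Williams functional, I would then write
$$\phi(G) \;=\; \frac{E}{V-1} \;=\; \frac{3V-6}{V-1} \;=\; 3 - \frac{3}{V-1},$$
or equivalently $\phi(G) = 3F/(F+2) = 3 - 6/(F+2)$. Either form makes the strict inequality $\phi(G) < 3$ transparent, and since $V \geq 4$ one also gets the matching lower bound $\phi(G) \geq 2$ with equality on $K_4$.

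There is no substantive obstacle here: once the $f$-vector lemma is in hand, both assertions reduce to one-line algebra. The conceptual content worth flagging is that the strict gap from $3$ is generated entirely by the ``$+2$'' in $V = 2 + F/2$, i.e.\ by the Euler characteristic $X = 2$ of the sphere. For any other closed surface $X \leq 0$ flips the sign of this correction and pushes $\phi(G) \geq 3$, foreshadowing the arboricity jump from $3$ to $4$ that is the main theorem of the section.
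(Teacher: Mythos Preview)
Your approach matches the paper's: apply the $f$-vector lemma with $X=2$ and read off the Nash-Williams ratio. You are right that the strict inequality $E<3V-6$ in the statement is a slip --- substitution gives $E=3F/2=3(2+F/2)-6=3V-6$ on the nose, and the paper's own one-line proof is garbled at exactly this point (the displayed inequality ``$3F/2<2+F/2-6$'' makes no sense as written). The operative conclusion $E/(V-1)=3F/(F+2)<3$ is what both you and the paper actually use, and your derivation of it is cleaner.

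One small terminological mismatch: in this paper a $2$-sphere requires every unit sphere to be $C_n$ with $n\geq 4$, so $K_4$ is \emph{not} a $2$-sphere here (it is treated separately throughout, cf.\ the Whitney lemma). The smallest $2$-sphere is the octahedron with $F=8$, $V=6$, giving $\phi=24/10=2.4>2$; your lower-bound remark ``equality on $K_4$'' is therefore about an object outside the hypothesis. This does not affect the argument for $\phi<3$, which holds for all $F>0$. Your closing remark should also allow $X=1$ (the projective plane), not just $X\leq 0$.
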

\begin{proof}
From the lemma, we see that $3F/2 < 2+F/2-6$ which holds for $F >4$
But every 2-sphere satisfies this inequality.
\end{proof}

\begin{coro}[Lower bounds]
Let $G$ be a connected 2-manifold of Euler characteristic $X$. \\
a) If $X=2$, then the arboricity is at least 3. \\
b) If $X \leq 1$, the arboricity is at least 4.
\end{coro}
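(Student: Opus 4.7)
The plan is to read off both bounds directly from the $f$-vector identity $\phi(G)=3F/(X-1+F)$ of Lemma 2 and to apply the Nash-Williams theorem to the whole graph $G$, so that no clever choice of induced subgraph should be required in the generic cases. The algebra is completely elementary once the identity is in hand.

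For part (a), I substitute $X=2$ to get $\phi(G)=3F/(1+F)$, which exceeds $2$ as soon as $F>2$. Since every discrete 2-sphere has at least $F=8$ triangles (the octahedron being the minimal Whitney triangulation of the sphere), $\phi(G)\geq 8/3>2$, and Nash-Williams forces ${\rm arb}(G)\geq 3$.

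For part (b), I substitute $X\leq 1$ to get $\phi(G)=3F/(X-1+F)\geq 3F/F=3$, with strict inequality whenever $X\leq 0$. In that strict regime Nash-Williams immediately yields ${\rm arb}(G)\geq 4$ from the whole graph $G$ alone, with no need to search among induced subgraphs.

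The main obstacle is the boundary value $X=1$ (projective-plane-type manifolds), where the identity gives $\phi(G)=3$ exactly and so the whole-graph Nash-Williams bound yields only ${\rm arb}(G)\geq 3$. To push this up to $4$ one has to exhibit an induced subgraph $W$ with $\phi(W)>3$. Since every vertex of a 2-manifold has degree at least $4>3=\phi(G)$, a single-vertex deletion actually \emph{decreases} $\phi$, so the natural candidates are small combinatorial neighborhoods such as the union of two adjacent stars, or the induced subgraph on a short non-separating cycle together with its boundary, whose edge-to-vertex-minus-one ratio can be pushed above $3$. I expect this geometric step at $X=1$, rather than the routine $f$-vector arithmetic in (a) and (b), to be the only nontrivial point in the proof.
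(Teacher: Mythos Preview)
Your argument for part (a) and for part (b) with $X\le 0$ is exactly the paper's: evaluate $\phi(G)$ from the $f$-vector and apply Nash--Williams to the whole graph. (Minor remark: the denominator in Lemma~1 should be $2X-2+F$, since $V-1=X+F/2-1$; the paper's own proof uses the corrected form $3F/(2+F)$ at $X=2$, and your numerical conclusions survive the fix.)

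Where you and the paper part ways is the boundary case $X=1$. The paper simply writes ``$E/(V-1)=3$ so that the arboricity is already $4$'' and moves on, whereas you correctly observe that $\phi(G)=3$ yields only $\lceil 3\rceil=3$ from Nash--Williams and that something more is needed. However, the remedy you propose --- to exhibit an induced subgraph $W$ with $\phi(W)>3$ --- cannot succeed. Any induced subgraph $H$ of a discrete projective plane is itself embeddable in $\mathbb{RP}^2$, and every simple projective-planar graph on $n\ge 3$ vertices satisfies $|E(H)|\le 3n-3$ (pass to a cellular embedding and use $V-E+F=1$ together with $3F\le 2E$). Hence $\phi(H)\le 3$ for \emph{every} induced subgraph, the Nash--Williams maximum is exactly $3$, and the candidates you list (unions of adjacent stars, short non-separating cycles with their neighborhoods) will never push $\phi$ above $3$. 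So your strategy for $X=1$ is a dead end; in fact this computation gives arboricity $=3$ there, which suggests that the $X=1$ clause of part (b) is not merely hard to prove but actually in need of correction.
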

\begin{proof}
a) The above lemma gives for the entire graph the Nash-Williams
ratio $E/(V-1) = 3F/(2+F)$. This is $\geq 2$ for $F \geq 4$ and
so by Nash-Williams, we have arboricity larger than 2. \\
b) For $X=1$, which forces a connected surface to be a projective plane,
we have $E/(V-1)=3$ so that the arboricity is already 4.
For $X\leq 0$, we have $E/(V-1)>3$.
\end{proof}

\section{Upper bound}

\paragraph{}
In order to get upper bound for the arboricity, we will need to estimate
how many faces $F$ are needed for a given Euler characteristic $X$.
If $G$ is a surface $G$, define $K(x) = 1-{\rm deg}(x)/6$ as the
{\bf curvature} of $G$ at the point $x$.

\begin{lemma}[Gauss Bonnet]
$\sum_{x \in V} K(x) = X$.
\end{lemma}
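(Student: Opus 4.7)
The plan is a direct two-line computation using the two combinatorial identities already introduced in the excerpt: the Euler handshake formula $\sum_{v\in V}\deg(v)=2E$ and the Dehn-Sommerville relation $2E=3F$ for surfaces.

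First I would expand the definition of curvature to get
\[
\sum_{x\in V} K(x) \;=\; \sum_{x\in V}\Bigl(1-\tfrac{\deg(x)}{6}\Bigr) \;=\; V-\tfrac{1}{6}\sum_{x\in V}\deg(x).
\]
Then I would apply the handshake formula to rewrite the sum of degrees as $2E$, giving $\sum_x K(x)=V-E/3$. Finally I would use $3F=2E$, which for a surface holds because every edge is contained in exactly two triangles and every triangle has exactly three edges, to replace $E/3$ by $E-F$. This yields $\sum_x K(x)=V-E+F=X$, which is precisely the Euler characteristic as defined earlier in the excerpt.

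There is essentially no obstacle here: both ingredients have already been established (the Dehn-Sommerville identity is stated and used earlier, and the handshake formula is invoked in the introduction). The only small point worth noting in the write-up is that the computation is valid for every finite simple graph in which $3F=2E$ holds, and in particular for every $2$-manifold in the sense defined above, so the lemma applies uniformly to all surfaces regardless of topological type.
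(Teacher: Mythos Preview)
Your proof is correct. It differs in presentation from the paper's argument, though both are short. The paper redistributes the weight $\omega(x)=(-1)^{\dim(x)}$ of every simplex $x$ onto its vertices: each edge sends $-1/2$ to each endpoint, each triangle sends $+1/3$ to each of its three vertices, yielding at every vertex the quantity $1-\deg(v)/2+\deg_2(v)/3$, where $\deg_2(v)$ counts triangles through $v$; the manifold condition then gives $\deg_2(v)=\deg(v)$ locally, recovering $K(v)$. You instead sum the already-given curvature formula globally and invoke the global identities $\sum_v \deg(v)=2E$ and $3F=2E$. Your route is slightly quicker given that both identities have already been stated; the paper's redistribution viewpoint has the advantage that it works verbatim for arbitrary simplicial complexes (not only surfaces) and shows transparently how the curvature expression arises, which is useful when the author later discusses curvature for higher-dimensional manifolds.
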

\begin{proof}
The Euler characteristic is $\sum_{x \in V \cup E \cup F} \omega(x)$,
where
$\omega(x)=(-1)^{{\rm dim}(x)}$. Leave the value $\omega(x)=1$ on $v$
for $v \in V$,
move the value $\omega(x)=-1$
from each $x \in E$ to the two vertices and each value $\omega(x) = 1$
for $x \in F$ to
its three vertices. This leads to $K(v) = 1-{\rm deg}(v)/2 + {\rm
deg}_2(v)/3$,
where ${\rm deg}_2(v)$ are the number
of triangles containing $v$. In the manifold case, ${\rm deg}_2(v)={\rm
deg}(v)$
so that $K(v) = 1-{\rm deg}(v) (1/2-1/3) = 1-{\rm deg}(v)/6$.
\end{proof}

\paragraph{}
It is an interesting challenge to find the minimal area for a 2-manifold
of a certain type.
We do not need to know the minimum but can find some lower bounds. We
think that $a),b),c)$
are sharp and that c) is the sharp discrete Loewner's inequality:

\begin{coro}
Assume $G$ is a connected 2-manifold of Euler characteristic $X$. \\
a) If $X=2$, then $G$ has at least $8$ faces. This is the minimum. \\
b) If $X=1$, then $G$ has at least $14$ faces \\
c) If $X=0$, then $G$ has at least $20$ faces. \\
d) If $X<0$, then $G$ has at least $12-14X$ faces.
\end{coro}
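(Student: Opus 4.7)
The plan is to convert each face bound into a vertex bound via the preceding lemma's identities $(V,E,F)=(X+F/2,3F/2,F)$; the four claims become $V\geq 6$, $V\geq 8$, $V\geq 10$, and $V\geq 6-6X$, respectively.

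For (a), I would combine Gauss--Bonnet with the manifold constraint. Because each unit sphere is a cycle of length at least $4$, every vertex has $\deg(v)\geq 4$ and therefore $K(v)\leq 1/3$; summing gives $X=\sum_v K(v)\leq V/3$, so $V\geq 3X=6$. The octahedron realizes equality with $F=8$.

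The key observation for (b)--(d) is that a discrete 2-manifold is $K_4$-free: since $S(v)=C_{\deg(v)}$ is triangle-free, no three neighbors of $v$ are mutually adjacent, so no $K_4$ passes through $v$. Tur\'an's theorem then gives $E\leq\lfloor V^2/3\rfloor$, with equality only at the Tur\'an graph $T(V,3)$; substituting $E=3(V-X)$ yields $V^2\geq 9(V-X)$. For (b), $X=1$ immediately gives $V\geq 8$. For (c), $X=0$ gives $V\geq 9$, and the extremal case $V=9$ forces $G=T(9,3)=K_{3,3,3}$; but then $S(v)=K_{3,3}\neq C_6$, contradicting the 2-manifold condition, so $V\geq 10$.

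Part (d) is the main obstacle. Tur\'an alone yields only $V\gtrsim\sqrt{-X}$, whereas the claim $V\geq 6-6X$ is linear in $-X$, so additional structure from the cyclic link at each vertex must be exploited. The plan is to use that for each $v$ of degree $d$, the requirement $S(v)=C_d$ forces exactly $d(d-3)/2$ of the pairs of neighbors to be non-edges of $G$; summing these missing-edge counts and comparing with the global non-edge count $M=\binom{V}{2}-3(V-X)$ via $\sum_v\deg(v)(\deg(v)-3)/2 \leq (V-2)M$, together with Jensen's inequality applied to the convex function $d\mapsto d(d-3)/2$, should tighten the Tur\'an estimate. Closing the last gap from the quadratic Tur\'an bound to the linear bound $V\geq 6-6X$ is the hardest step and will likely require ruling out the near-extremal Tur\'an configurations by showing their links cannot be cycles, in the spirit of the $K_{3,3,3}$ argument from part (c).
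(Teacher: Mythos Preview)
Your treatment of (a) coincides with the paper's. For (b) and (c) you take a genuinely different route: the paper argues topologically---for (b) that an orientation-reversing closed walk already needs $7$ vertices and one more to close up to a manifold, and for (c) that two length-$4$ generators of the fundamental group meeting in a point give $7$ vertices, with a Gauss--Bonnet correction supplying three more---whereas you exploit the $K_4$-freeness of discrete $2$-manifolds together with Tur\'an's theorem, and in (c) identify the unique extremal graph $K_{3,3,3}$ and discard it because its links are $K_{3,3}$ rather than $C_6$. Your argument is cleaner and purely combinatorial; the paper's is more geometric but also more ad hoc.

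Part (d) is where your proposal has a genuine gap, as you yourself concede. Tur\'an alone yields only $V^2\geq 9(V-X)$, hence $V\gtrsim 3\sqrt{-X}$, and your proposed sharpening via $\sum_v \deg(v)(\deg(v)-3)/2 \leq (V-2)M$ is trivially true (each non-edge of $G$ lies in at most $V-2$ open neighborhoods) and therefore carries no new information; no amount of local link-counting of this kind will upgrade a square-root bound to a linear one. The paper's idea for (d) is topological rather than extremal: from $X=2-b_1$ in the orientable case or $X=1-b_1$ in the non-orientable case one gets $b_1\geq 1-X$, and the presence of $b_1$ independent non-contractible cycles is then used to force $V\geq 6b_1\geq 6-6X$, whence $F=2(V-X)\geq 12-14X$. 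The missing ingredient in your approach is precisely this appeal to the first Betti number.
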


\begin{proof}
We have $(V,E,F)= (X+F/2,3F/2,F)$ and $V=X+F/2$ gives
$$    F = 2(V-X)  \; . $$
a) The curvature can not be larger than $1/3$, so that
   we need at least $2*3=6$ vertices which gives $F=2(V-X)=8$.  \\
b) Since we need to have a closed loop changing the orientation of a
triangle, we
need at least $7$ vertices. In order to make this a manifold, we need at
least
one vertex more so that $V \geq 8$.  This implies $F=2(V-X)=2(8-1)=14$.
\\
c) Look at the two shortest generators of the fundamental group.
   These are two curves of length $4$ which intersect a point $a$. This
   gives 7 points.
   By Gauss-Bonnet two of the points must have at least 6 neighborhood
   this gives 3 new
   points at least so that $V \geq 10$. This implies $F \geq
   2(V-X)=2V=20$. \\
   Given any triangulation of a 2-torus, we can get a triangulation of a
   Klein
   Bottle and vice versa by reversing the identification along one of
   the loops,
   we again have $F \geq 20$. \\
d) Since $X=2-b_1$ or $X=1-b_1$, depending on orientability,
   we have $b_1=2-X$ or $b_1=1-X$. There must be at least
   $3*2*b_1=12-6X$ or $6-6X$ vertices. With $V \geq 6-6X$ we have
   $F=2(V-X) \geq 2(6-7X) \geq 12-14X$ vertices.
\end{proof}

\paragraph{}
All these estimates could certainly be improved:
the {\bf systole} of a discrete 2-torus $G$ is the length of the
shortest
non-contractible curve in $G$. It is $4$.
A discrete version of {\bf Loewners inequality} will give
$4^2 \leq F/2$. This gives $F \geq 32$.

\paragraph{}
The next proposition shows that the {\bf projective plane} is the
threshold for
arboricity. We should point out that with ``discrete projective plane"
we do of course
not mean finite projective planes over finite fields but finite graphs
which are 2-manifolds
and have Euler characteristic 1. In any example, the geometric
realization
(which we do not look at) would be a classical 2-dimensional projective
plane.

\begin{propo}
a) If $X=2$, then $E/(V-1) < 3$. \\
b) If $X=1$, then $E/(V-1) = 3$. \\
c) If $X \leq 0$, then $E/(V-1) <4$ meaning $F > 8-8X$.
\end{propo}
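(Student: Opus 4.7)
The plan is to substitute the $f$-vector relation $(V,E,F) = (X + F/2,\, 3F/2,\, F)$ from the preceding lemma directly into the Nash-Williams functional $\phi(G)=E/(V-1)$, obtaining the single rational expression
\[
  \phi(G) \;=\; \frac{3F/2}{X + F/2 - 1} \;=\; \frac{3F}{2X + F - 2},
\]
and then analysing this formula case by case in $X$. All three parts then reduce to elementary manipulations of this one identity, combined (only in part (c)) with the face lower bounds already established in Corollary 4.

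For part (a), setting $X=2$ gives $\phi(G) = 3F/(F+2)$, which is strictly less than $3$ for every positive $F$ because $3F < 3F + 6$; this recovers the Algor-Alon corollary in a uniform algebraic form. For part (b), setting $X=1$ cancels the additive constants in the denominator, leaving $\phi(G) = 3F/F = 3$ exactly, independent of $F$. So every discrete 2-manifold of Euler characteristic $1$ sits precisely at the Nash-Williams threshold separating arboricity $3$ from arboricity $4$, which is what makes the projective plane the critical case.

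The substantive step is (c). Rearranging $3F/(2X + F - 2) < 4$ yields the equivalent inequality $F > 8 - 8X$, so the task reduces to a lower bound on the number of triangles. Here I would invoke Corollary 4 directly: for $X = 0$ it guarantees $F \geq 20$, which comfortably exceeds $8 - 8 \cdot 0 = 8$; for $X \leq -1$ it gives $F \geq 12 - 14X$, and one need only verify the elementary inequality $12 - 14X > 8 - 8X$, i.e.\ $4 > 6X$, which is automatic when $X$ is negative.

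The main obstacle, such as it is, is really just ensuring that the case split at $X = 0$ is handled separately from $X \leq -1$, because the linear bound $F \geq 12 - 14X$ of Corollary 4 is only claimed there for strictly negative Euler characteristic; the $X=0$ case must draw on the sharper systolic-type bound $F \geq 20$ from part (c) of that corollary. Once both inputs are in place, the proof is essentially a substitution followed by a two-line elementary inequality.
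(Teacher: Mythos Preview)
Your argument is correct and follows the paper's approach essentially verbatim for parts (a) and (b): both substitute the $f$-vector identity to get $\phi(G)=3F/(2X+F-2)$ and then read off the inequalities directly. For part (c) you actually do \emph{more} than the paper's proof of this proposition: the paper only derives the equivalence $E/(V-1)<4 \iff F>8-8X$ and stops there, deferring the verification that $F>8-8X$ actually holds to the subsequent corollary (``There is no surface of arboricity $5$''), whereas you close the loop immediately by invoking the face lower bounds from Corollary~4. This is a minor organizational difference rather than a different method---your reading of the statement as asserting $E/(V-1)<4$ outright is the natural one, and your appeal to the $X=0$ and $X<0$ cases of Corollary~4 is exactly how the paper eventually completes the argument.
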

\begin{proof}
We have $V=X+F/2, E=3F/2$. For $F \to \infty$ the ration $E/(V-1)$
converges to 3.
We need to show that in the case $X=2$ it is always smaller than 3 and
that in the
case $X \leq 1$, it is always smaller than $4$. This produces lower
bounds for the
area $F$. \\
a)   $E/(V-1)=3F/(2+F) <3$. This is always the case. We do not need any
conditions.
b)   In the case $X=1$ we have $E/(V-1)=3$. The Nash-Williams functional
is independent of
     the surface! We can not get lower nor higher. The arboricity is
     $4$.  \\
c)   Lets look at the equation $E/(V-1)=3F/(2X+F-2) = 4$.
     This gives us an upper bound for $F$ for which we possibly could
     have
     Nash-Williams ratio $4$. This means $3F = 8X+4F-8$. This means
     $F=8-8X$.
\end{proof}

\begin{coro}
There is no surface of arboricity $5$.
\end{coro}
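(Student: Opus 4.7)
My plan is to combine the Nash-Williams theorem with the uniform bound $\phi(G)<4$ on all $2$-manifolds supplied by the preceding Proposition. I would suppose, for contradiction, that some surface $G$ has arboricity $5$; Nash-Williams then produces an induced subgraph $H=(W,E_W)$ with $\phi(H)=E_W/(|W|-1)>4$, and my goal becomes to rule out such an $H$.

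\paragraph{}
The first step is to reduce $H$ to a closed surface (or to $K_4$). I would recursively delete any vertex of degree one, using the elementary identity that $(E-1)/(V-2)\geq E/(V-1)$ whenever $\phi\geq 1$, so that the ratio stays strictly above $4$ throughout. The resulting leafless $H'$ sits inside the triangulated surface $G$, so each of its bounded polygonal regions can be completed by inserting diagonals on the existing vertex set; since adding edges strictly increases $\phi=E/(V-1)$, the completed graph $H''$ still satisfies $\phi(H'')>4$. This is exactly the leaf-deletion and face-completion procedure already used in the earlier corollary on $\phi$-determination, and it outputs either $K_4$ or a $2$-manifold. Since $\phi(K_4)=6/3=2$, the output must be a surface, and the Proposition then gives $\phi(H'')<3$, $\phi(H'')=3$, or $\phi(H'')<4$ according to whether $X(H'')$ equals $2$, equals $1$, or is at most $0$. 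In every case $\phi(H'')<4$, contradicting $\phi(H'')>4$, so no surface can have arboricity $5$.

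\paragraph{}
The main obstacle I expect is making the triangulation reduction precise: I would need to verify that a leafless induced subgraph of the triangulated manifold $G$ bounds only polygonal disks, that each such disk admits a diagonal triangulation on its existing vertex set, and that the completed object is genuinely a $2$-manifold to which the Proposition applies. This is the same reduction invoked (without full detail) in the preceding $\phi$-determines corollary, so the present argument rests on exactly the same footing and inherits whatever subtleties are already tolerated there.
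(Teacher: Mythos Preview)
Your proof is correct and follows essentially the paper's approach: both reduce a hypothetical bad induced subgraph to a closed surface via leaf-deletion and face-triangulation (exactly the mechanism of the earlier ``$\phi$ determines'' corollary, which the paper's proof invokes implicitly), and then appeal to the Proposition's uniform bound $\phi<4$ on every $2$-manifold. The paper's version merely re-expands that bound into the case split on $X$, comparing the area lower bounds $F\geq 20$ and $F\geq 12-14X$ against the threshold $F=8-8X$, but the underlying logic is identical and rests on the same triangulation subtlety you flag.
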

\begin{proof}
(i)  For $X=2$, we have seen the arboricity is $3$.  \\
(ii) For $X=1$, we have the arboricity at least $4$ because of the
previous
proposition b). \\
(iii) For $X=0$ we need at least 28 faces. \\
(iv)  Lets look at $X<0$. But there $F \leq 8-8X$ is
incompatible with $F \geq 10-14X$.
\end{proof}

\section{Remarks}

\paragraph{}
The {\bf Barycentric refinement} of a graph $G$ takes the complete
subgraphs as the
new vertex set and connects two if one is contained in the other.

\begin{propo}
a) For $X=2$, the 2-spheres, the Barycentric refinement increases the
ratio $E/(V-1)$.
b) For $X=1$, the projective plane the refinement leaves the ratio
invariant.
c) For $X \leq 0$, the refinement reduces the ratio.
\end{propo}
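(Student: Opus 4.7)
The plan is to transport the Nash-Williams ratio through the barycentric refinement by first computing the $f$-vector of $G_1$ in terms of the $f$-vector of $G$, then plugging into the closed form from Lemma 1. All three parts of the proposition will then reduce to the sign of a single factor.

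First I would recall that $G_1$ has one vertex for each complete subgraph of $G$, so the new vertex set partitions into original vertices, edge centers, and triangle centers. This gives $V_1 = V + E + F$. Each triangle of $G$ is split into six smaller triangles, so $F_1 = 6F$. Because barycentric refinement of a 2-manifold is still a 2-manifold (every new unit sphere is again a cycle of length at least 4), the Dehn-Sommerville identity applies to $G_1$ and yields $E_1 = 3F_1/2 = 9F$.

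Next I substitute $(V,E,F) = (X + F/2,\, 3F/2,\, F)$ from Lemma 1. This gives $V_1 - 1 = X + 3F - 1$, and the two Nash-Williams ratios are
\[
\frac{E}{V-1} = \frac{3F}{2X + F - 2}, \qquad \frac{E_1}{V_1 - 1} = \frac{9F}{X + 3F - 1}.
\]
Subtracting and clearing denominators, the numerator collapses to $15 F (X - 1)$, while both denominators are positive for every 2-manifold under consideration (one checks this against the face lower bounds from the earlier corollary). Hence the sign of $E_1/(V_1-1) - E/(V-1)$ equals the sign of $X - 1$: strictly positive in case (a) where $X = 2$, zero in case (b) where $X = 1$, and strictly negative in case (c) where $X \leq 0$.

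The only step requiring any care is the bookkeeping on the refinement: verifying $V_1 = V + E + F$, $F_1 = 6F$, and that $G_1$ remains a 2-manifold so that Dehn-Sommerville may be invoked on it. Once that is in hand, the whole proposition is a single algebraic identity that isolates the factor $X - 1$; there is no substantive obstacle beyond this routine calculation.
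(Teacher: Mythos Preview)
Your argument is correct and follows the same route as the paper: compute the refined $f$-vector $(V_1,E_1,F_1)=(X+3F,\,9F,\,6F)$, form the new ratio $9F/(X+3F-1)$, and compare with $3F/(2X+F-2)$. The paper stops after writing down the two ratios and noting the limit $3$; you go one step further and cleanly isolate the factor $15F(X-1)$ in the difference, which makes the trichotomy on $X$ immediate and is exactly the right way to finish the argument.
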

For given Euler characteristic $X$, the area $F$ alone determines both
$V$ and $E$ and
so the Nash-Williams fraction. We have seen $\phi(G) = E/(V-1) =
3F/(X-2+F)$
We have $V'=V+E+F=X-2E=X+3F, E'=2E+6F=9F$, $F'=6F$.
Therefore $E'/(V'-1) = 9F/(X-1+3F)$.
It follows that in the Barycentric limit, $E/(V-1)$ converges to $3$.

\paragraph{}
Given any surface type. For which surface do we maximize or minimize
the Nash-Williams functional $E/(V-1)$ on sub-graphs.
Is there an example of a surface for which $\phi((V,E))= E/(V-1)$
evaluated on a subgraph generated by a subset of the vertex set
is smaller than the functional evaluated on the manifold itself?

{\bf Conjecture:} for any $d$-manifold (a graph for which every unit
sphere $S(v)$
is a $(d-1)$-sphere), the Nash-Williams functional has its global
maximum at $G$.
This fails for non-manifolds like  $G=K_2+K_1$ of $H=K_2$ and $K_1$ for
example
where $\phi(K_2+K_1) = 1/3$ but $\phi(H)=1/2$ which is larger than
$\phi(G)$.
For surfaces, if $\phi(G)$ determines the arboricity, it would be a
spectral property.

\paragraph{}
For $d$-manifolds, graphs for which every unit sphere $S(v)$ is a
$(d-1)$-sphere,
with $d \geq 3$ the situation is much different:

\begin{thm}
For any d-manifold type for $d \geq 3$, there are examples of arbitrary
large arboricity.
\end{thm}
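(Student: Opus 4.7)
The plan is to reduce the theorem to one explicit construction, namely $d$-spheres whose $1$-skeleton is the complete graph $K_n$ for every $d \geq 3$ and every $n$, and then use connected sum to transfer the arboricity to any other manifold type. For the sphere case I would invoke the boundary complex of the cyclic polytope $C_{d+1}(n)$, the convex hull of $n$ distinct points on the moment curve in $\mathbb{R}^{d+1}$. Because $d+1 \geq 4$ forces $\lfloor (d+1)/2 \rfloor \geq 2$, this polytope is $2$-neighborly, so every pair of vertices spans an edge and the $1$-skeleton of the boundary sphere is $K_n$. Applied directly to $K_n$, the Nash-Williams formula gives $\phi = n/2$, so the arboricity of this $d$-sphere is at least $\lceil n/2 \rceil$, which is unbounded in $n$.

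\paragraph{}
For an arbitrary $d$-manifold type $M$, I would then form the simplicial connected sum $M \# S_n$ with such a sphere $S_n$: remove the interior of one $d$-simplex from each side and identify the resulting boundary $(d-1)$-simplices via a bijection of their $d+1$ vertices. Topologically $M \# S^d = M$, so the type is preserved. The identification only fuses $d+1$ vertices of $S_n$ with $d+1$ vertices of $M$, and since these vertices already spanned the same $K_{d+1}$ on both sides, no new edges appear among the vertex set of $S_n$. Consequently the induced subgraph of $M \# S_n$ on $V(S_n)$ is still the $1$-skeleton of $S_n$, namely $K_n$, and monotonicity of the Nash-Williams bound under induced subgraphs forces the arboricity of $M \# S_n$ to be at least $\lceil n/2 \rceil$.

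\paragraph{}
The main obstacle is bookkeeping rather than calculation: one has to certify that $\partial C_{d+1}(n)$ and the connected sum are $d$-manifolds, and $d$-spheres in the sphere case, in the strict inductive combinatorial sense used in the paper, not merely in the topological sense. Both checks reduce to classical facts about simplicial polytopes: vertex figures of a simplicial polytope are again boundaries of simplicial polytopes one dimension lower, which lets one induct on $d$ to verify that the unit spheres are $(d-1)$-spheres, and removing a vertex from the boundary of a simplicial polytope gives a simplicial $d$-ball which unwinds as contractible in the paper's recursive sense. For the connected-sum surgery, the links of the identified boundary vertices are joins of the two old link pieces glued across a common $(d-2)$-sphere, which is again a $(d-1)$-sphere, and all other links are unchanged. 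Neither verification involves any real computation.
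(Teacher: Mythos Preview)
There is a real gap precisely where you call it bookkeeping. The boundary $\partial C_{d+1}(n)$ is \emph{not} a $d$-manifold in the paper's sense, for any $n$. The recursive definition here bottoms out at $1$-spheres being cycles $C_k$ with $k\geq 4$: the triangle $C_3$ has unit spheres $K_2$ rather than the $0$-sphere $\overline{K_2}$, so it is excluded. Your proposed induction (``vertex figures of simplicial polytopes are polytope boundaries, hence spheres by induction'') therefore fails at the base case, and it fails concretely one level up: in $\partial C_4(n)$ the edge $\{1,3\}$ lies, by Gale's evenness condition, in exactly the three facets $\{1,2,3,4\}$, $\{1,2,3,n\}$, $\{1,3,4,n\}$, so its link is $C_3$; hence $S(1)$ contains a degree-$3$ vertex and is not a $2$-sphere in the required sense. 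The connected-sum step has a difficulty of the same flavour: in the clique-complex world you cannot ``remove the interior of a $d$-simplex,'' because after identification the $d{+}1$ shared vertices still span a $K_{d+1}$ and hence automatically a $d$-simplex, so each of its codimension-one faces now lies in three $d$-faces rather than two, again breaking the manifold condition.

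For comparison, the paper never leaves the class of combinatorial manifolds at all: it starts from a representative of the given type and repeatedly subdivides a single edge. In a $3$-manifold, subdividing an edge $(a,b)$ with $|S(a)\cap S(b)|=k$ inserts one vertex and $k{+}1$ edges while keeping every unit sphere a $2$-sphere; after first arranging an edge of degree $k>n$, iterating drives $E/V$ past $n$ and Nash--Williams gives arboricity $>n$. Your strategy might be salvageable by further refining $\partial C_{d+1}(n)$ until all links meet the $C_{\geq 4}$ constraint, but any such repair must simultaneously avoid destroying the $K_n$ on the original vertices and avoid creating new bad links (for instance, a single stellar subdivision of a facet already introduces a vertex whose unit sphere is $\partial\Delta^d$), so it is substantive work rather than bookkeeping.
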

\begin{proof}
It is enough to look at a $3$-manifold because for a $d>3$ manifold,
we can look at intersections of unit spheres which are 3-manifolds.
Assume we want to reach the target arboricity $n$. First refine
edges to render an edge degree larger than $n$. We call this graph $H$.
Note that this
can increase the vertex degrees also. Now, refine the edge $m$ times.
Each such refinement
adds one vertex and $n+1$ edges. So, after
$m$ steps, (always doing it with the same edge) and $m$ large enough, 
we have the fraction $E/V$
(for the graph $H$ with at least one large vertex degree larger than
$n$) change to
$$ \frac{E}{V} \to \frac{E + m(n+1)}{V+m} \; . $$
In the limit $m \to \infty$, this gets closer and closer to $n+1$. We
will
therefore eventually be larger than $n$. By the Nash-Williams theorem,
also the arboricity
of that refined graph will be larger than $n$.
\end{proof}

\paragraph{}
The arboricity of a $d$-sphere can take values $d+1$ or more higher.
We first believed that there is no $d$-manifold $G$ with arboricity
larger than $c_d$,
the Perron-Frobenius bound given in the Barycentric limit.
See \cite{ThreeTreeTheorem}.

\paragraph{}
Here are two questions. \\
{\bf 1)} Is the arboricity of a graph a spectral property for
graphs or at least for $d$-manifolds? The later would be true
if the above stated conjecture would hold. \\
{\bf 2)} The minimal possible arboricity of a $d$-manifold type is a
topological invariant by definition. Can one use this to distinguish
manifolds? The minimum for $d$-spheres is $d+1$. What is the minimum
for $d$-tori?

\section*{Appendix: the theorem of Whitney of 1931}

\paragraph{}
The argument of reducing the 4-color problem to planar graphs which are
triangulations
goes back to Arthur Cayley \cite{Crilly}. Hassler Whitney
\cite{Whitney1931} proves a
variant of the lemma below (we used it in \cite{knillgraphcoloring}).
A graph $G=(V,E)$ is {\bf maximally planar} if one can not increase the
edge set $E$ (while keeping the vertex set $V$ fixed) without losing the
planar property.
Planarity is by Kuratowski a finite combinatorial notion not needing the
continuum:
the class of planar graphs are the graphs which do not contain any edge
refined
version of $K_5$ nor $K_{3,3}$.

\paragraph{}
A graph $G=(V,E)$ is called {\bf $4$-connected}, if removing $3$ or less
vertices keeps
the graph connected. We always assume $G$ to be connected = 1-connected.
We make the choice to call the empty graph $0$ not connected and $K_1$
1-connected by not
2-connected and $K_2$ 2-connected by not 3-connected and $K_3$
3-connected but not
4-connected and $K_4$ 4-connected but not $5$-connected.
Every $2$-sphere is $4$-connected, as we will see. A wheel graph
with boundary $C_n$ with $n \geq 4$ is never 4-connected as we can
separate any of the boundary
points $v$ by removing its unit sphere $S(v)$ which is a path graph with
$3$ vertices.
Graphs with more than 4 elements for which some vertex degree ${\rm
deg}(v) \leq 3$,
is not 4-connected. The octahedron graph is a 2-sphere. It is $4$
connected but not $5$-connected.

\paragraph{}
The next statement is not in the literature as such, because the notion
of a
2-sphere is usually defined differently, like using notions of
triangulations
of Euclidean spheres. The statement on page 389 of the paper
\cite{Whitney1931}
comes close. The argument of reducing the 4-color problem to planar
graphs which are triangulations goes back to Arthur Cayley and is part
of ``folklore", meaning
often referred to without bothering to reference or prove it. According
to \cite{Stromquist}
(work from 1975 having the bad luck to be in the shadow of the computer
assisted proof
\cite{AppelHaken1} from 1976), almost everybody who has thought about
the
4-color theorem has been led to ideas like reducing to 4-connected and
maximally planar graphs.
We first believe that coloring 2-spheres could be easier than coloring
planar graphs
as there are more tools like the Sard theorem \cite{KnillSard},
telling that for a locally injective function $f$ on a $d$-manifold, the
level curves
are all $(d-1)$-spheres or empty. Barycentric refinement changes
chromatology however.
The level curves are in the Barycentric refinement $G_1$ of $G$
a graph $G$ of maximal dimension $2$, the chromatic number of $G_1$ is
$3$
as $f(x)={\rm dim}(x)$ is a color. For 2-manifolds already, the
chromatic number can be 5
and is conjectured to be $5$ or less by a {\bf conjecture of Albertson
and Stromquist} of 1982
\cite{AlbertsonStromquist} (using slightly different definitions for
manifolds).

\begin{lemma}[Whitney]
A graph with at least one edge is both $4$-connected and
maximal planar, if and only if is a $2$-sphere or $K_4$.
\label{whitneylemma}
\end{lemma}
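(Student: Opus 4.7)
The plan is to treat the two directions separately, using Euler's formula together with the fact that maximal planarity forces every face of a planar embedding to be a triangle.

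For the ``$\Leftarrow$'' direction, $K_4$ is obviously $4$-connected and maximal planar. For a $2$-sphere $G$ in the paper's inductive sense, I would induct on $V$ using the defining property that $G-v$ is contractible for some $v$: the inductive contraction builds an explicit planar embedding whose faces are exactly the triangles of $G$, and with the $f$-vector $(V,E,F) = (2+F/2,\, 3F/2,\, F)$ established earlier, no further edge can be inserted without losing planarity. For $4$-connectedness, I would assume $\{a,b,c\}$ separates $G$ and reroute a path between putative separated vertices through the unit sphere of a vertex adjacent to $\{a,b,c\}$, which is an induced cycle of length $\geq 4$.

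For the ``$\Rightarrow$'' direction, assume $G$ is $4$-connected and maximal planar with at least one edge. If $V\leq 4$, then $4$-connectedness combined with having an edge forces $G = K_4$; so assume $V\geq 5$. Fix a planar embedding. Maximal planarity forces every face to be a triangle, since a face bounded by $\geq 4$ edges admits a diagonal. The key step is to show that each unit sphere $S(v)$ is an induced cycle of length $\geq 4$. The faces incident to $v$ are triangles $v w_m w_{m+1}$, which read off a closed walk through the neighbors of $v$; this is a simple cycle because $G$ is simple, and its length is $\geq 4$ because $4$-connectedness together with $V\geq 5$ forces the minimum degree to be $\geq 4$. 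Combining this with Euler's formula and $3F = 2E$ shows that $G$ is a $2$-manifold with $X = 2$, and $G-v$ for any $v$ is a triangulated topological disk, which is combinatorially contractible, so $G$ is a $2$-sphere in the paper's sense.

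The main obstacle, and the only step where $4$-connectedness does real work, is ruling out chords in $S(v)$. Suppose $w_i w_j \in E$ with $w_i, w_j \in S(v)$ non-consecutive in the cyclic link order. Then $\{v,w_i,w_j\}$ is a $3$-cycle in $G$. It is not a face of the embedding because the faces incident to $v$ are exactly the $v w_m w_{m+1}$ and $w_i, w_j$ are non-consecutive. In a maximal planar graph every non-facial triangle is a separating triangle, so $\{v,w_i,w_j\}$ disconnects $G$ upon removal, contradicting $4$-connectedness. This rules out every chord and closes the argument.
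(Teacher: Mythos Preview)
Your proposal is correct, and for the $\Leftarrow$ direction it tracks the paper closely: planarity via the inductive contractibility of $G-v$, maximal planarity from the edge count (the paper instead invokes a Jordan-curve obstruction to inserting any new edge), and $4$-connectedness by rerouting through unit spheres---though the paper spells out a case split on whether the putative cut set $\{a,b,c\}$ spans a triangle, which your one-line sketch suppresses.

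The $\Rightarrow$ direction is where you genuinely diverge. The paper builds up to ``$S(x)$ is an induced $C_n$ with $n\ge 4$'' through a chain of local degree constraints inside $S(x)$: it is triangle-free (else $G\supset K_4$), connected, leafless, and has no vertex of degree $\geq 3$ in $S(x)$---the last step established by exhibiting a topological $K_{3,3}$ and invoking Kuratowski. You instead read a Hamiltonian cycle in $S(v)$ directly off the rotation system of the embedding and then kill all possible chords in one stroke via the standard lemma that in a maximal planar graph on $\ge 5$ vertices every non-facial triangle is separating. Your route is shorter and more conceptual, at the price of importing that separating-triangle fact (itself a Jordan-curve argument) as a black box; the paper's route stays closer to first principles but pays with the somewhat delicate Kuratowski step~(f). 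Both arrive at the same endpoint, after which the contractibility of $G-v$ is argued the same way.
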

\begin{proof}
\fbox{{\bf (i)} $\Leftarrow$} First assume that $G$ is $K_4$ or a
$2$-sphere. \\
{\bf a)} If $G=K_4$, then it is maximally planar and 4-connected:
The maximal planarity is clear since $K_4$ is planar (Kuratowski
theorem) and
because all edges are drawn.
To see 4-connectedness, note that removing 3 vertices keeps a single
vertex, which is by definition a
connected graph. Assume from now on in part (i) of this proof that $G$
is a 2-sphere: \\
{\bf b)} $G$ must be {\bf tetrahedral-free} because $S(v)$ can not
contain $K_3$ by definition of $2$-spheres. \\
{\bf c)} To see that $G$ is $4$-connected, we use contraposition and
assume that it is now.
There are then 3 vertices $a,b,c$ which when removed, split the graph
into two disconnected parts $A,B$.
Case (1) If these three vertices are not all connected to each other,
then it is a path
graph $abc$ of length $2$ in the sphere which cuts the sphere into two
parts.
But this means $a$ and $c$ are boundary points.
Case (2) If the three vertices form a triangle then necessarily there is
a path around the
triangle. This is the union of all balls $B(a),B(b),B(c)$ minus $a,b,c$.
But this implies that removing the three vertices does not separate the
graph.
Any previous path connecting two points $x,y$ which passes through $a$
or $b$ or $c$
can be rerouted to take the detour. \\
{\bf d)} $G$ is planar: as $G$ is a 2-sphere, removing one vertex in $G$
produces
a {\bf contractible} graph $H$, meaning recursively that it is either
$K_1$ or that there
exists a vertex $v$ for which both $S(v)$ or $G \setminus v$ are both
contractible.
$H$ is now a graph with interior or boundary: the interior has circular
graphs $S(v)$,
the boundary has path graphs $S(v)$. That such a graph is planar can be
seen by
by induction, starting the smallest of this kind, the wheel graphs which
do not contain
any possibly refined $K_{3,3}$ nor $K_5$. Every extension step which
adds a vertex and keeps
the property of being a ball can be drawn in the plane and at every
stage the
boundary of the graph is a cyclic graph. Each extension step is a
pyramid extension
over a path subgraph of the boundary. \\
{\bf e)} $G$ is maximal planar: adding an other edge $(x,y)$ with $y$
different
from $S(x)$ is not possible because the edge would have to cross the
circle $S(x)$
which by the Jordan curve theorem can not happen.
[ The reference to the Jordan curve theorem uses the classical Jordan
curve theorem.
Alternatively, if $x,y$ are connected in $S(x)$ intersected with $S(y)$
consists
of exactly two points $a,b$ which are not adjacent. The unit sphere
$S(a)$ is
not connected as it contains part of $S(x)$ and part of $S(y)$ which are
not connected.] \\

\fbox{{\bf (ii)} $\Rightarrow$:} Now assume that $G$ is a 4-connected,
maximally planar graph $G$
different from $K_4$. We want to show that it is a 2-sphere. By
assumption, $G$ has at least one edge.
Define a {\bf face} in $G$ is a closed path in the graph which encloses
a
connectivity component. $A \subset \mathbb{R}^2$ of the complement of a
planar
embedding $\tilde{G}$ of $G$. [Alternatively if we do not want to refer
to the continuum,
it can be defined as a minimal non-contractible loop in $G$. This loop
can be also a triangle.]
If there is no face, then there is no closed loop in the graph the graph
is
a tree, which is not $4$-connected. So there is at least one face.  \\
{\bf a)} Every face of $G$ is a triangle: this follows from maximal
4-connected planarity:
   if we had an $n$-gon face with $n>3$,
   we could add additional diagonal connections, without violating
   either 4-connectivity nor planarity. \\
{\bf b)} The graph $G$ can not contain $K_4$: this would contradict
$4$-connectivity
     or imply that $G$ itself is $K_4$. \\
{\bf c)} A unit sphere $S(x)$ is triangle-free: otherwise we had a
tetrahedral unit ball $B(x)$
     contradicting b). \\
{\bf d)} The unit sphere $S(x)$ is connected: if it were disconnected,
then
         an additional connection in $S(x)$ could be added. This would
         violate the maximal
         4-connected planarity. \\
{\bf e)} The vertex degree ${\rm deg}(y)$ of every $y \in S(x)$ within
$S(x)$
       is larger than $1$: assume $y \in S(x)$ was a leaf, having only
       one neighbor $a$
       in $S(x)$.
       As removing both $x,y$ keeps the graph connected, $a$ is
       connected via a
       path to an other point $b \in S(x)$ and so directly connected to
       a neighboring
       point $y \in S(x)$. \\
{\bf f)} The vertex degree of $y$ in $S(x)$ can not be $3$: removing
$x,y,b$ with not
        interconnected $(a,b,c)$ neighboring $y \in S(x)$ would render
        $G$ disconnected
        as a path $(a,\dots ,h, \dots,c)$ would lead to a homeomorphic
        copy of $K_{3,3}$
        containing vertices $(h,y,x,a,b,c)$ inside $G$ contradicting the
        Kuratowski theorem. \\
{\bf g)} Each unit sphere is cyclic: from steps c)-f) follows that
$S(x)$ is cyclic with
        $n \geq 4$. The reason is that we have a graph for which every
        vertex $y$ in
        $S(x)$ has exactly $2$ disconnected neighbors. This means that
        $S(x)$ is
        a cyclic graph $C_n$ with $n \geq 4$. \\
{\bf h)} If we remove one vertex, the graph $H=G-v$ is contractible: we
can assume
         that $G$ is larger than $K_4$. The graph has interior points,
         the vertices which were not connected to $v$ and so have
         circular unit spheres as well as boundary points, vertices
         which were connected to $v$.
         As we have seen already that all unit spheres $S(v)$ in $G$
         were circular graphs, the set of boundary points of $H$
         forms a circular graph. Take a boundary point $w$ away. The set
         of boundary points still forms a circular graph.
         Now add again $v$ and connect to the new boundary. This
         procedure is technically
         an edge collapse. We might have generated a $K_4$ subgraph. But
         then we either have $K_4$ or can remove
         the vertex with $S(v)$ being a triangle. We have made the graph
         smaller and can see by induction
         that $H$ was contractible.
\end{proof}

\section*{Appendix: about higher dimensional manifolds}

\paragraph{}
For every $k$-simplex $x=(x_0,\dots,x_{k})$ in a $d$-manifold (meaning a
$K_k$ subgraph),
the intersection of all unit spheres $S(x_j)$ is a $(d-k-1)$-dimensional
sphere.
For $k=d-2$, it is 1-sphere. Its length is the {\bf degree} ${\rm
deg}(x)$ of
that simplex $x$. For $d=2$, this is the usual vertex degree.
The $f$-vector $(f_0,\dots,f_d)$ counting the sub-simplices is in small
dimension
renamed with letters like $(V,E,F)$ for surfaces.
The quantities $V,E,F,C,H,\dots$  counting the number of vertices,
edges, faces, chambers, hyper-chambers etc are all {\bf valuations}
meaning
that $X(A \cup B) = X(A) + X(B) - X(A \cap B)$. The edge degrees ${\rm
deg}_k(x)$ in
dimension $k$ divided by $k+1$ are the {\bf curvatures} of $f_k$.
Formulas like $\sum_{x \in F} {\rm deg}(x)= 4C$ are {\bf Gauss-Bonnet
formulas}
for these valuations. Here are some lower dimensional cases: \\

\begin{small}
\begin{tabular}{|l|l|l|l|l|} \hline
dim  & f-vector      &   Dehn-Sommerville  & degree                     
& X \\ \hline
1    & $(V,E)$       &   $2E=2V$           &                           
& $V-E$ \\
2    & $(V,E,F)$     &   $3F=2E$           & $\sum_{x \in V} {\rm
deg}(x) = 2E$ & $V-E+F$ \\
3    & $(V,E,F,C)$   &   $4C=2F$, $X=0$    & $\sum_{x \in E} {\rm
deg}(x) = 3F$ & $V-E+F-C$ \\
4    & $(V,E,F,C,H)$ &   $5H=2C$,$22E+40C=33F+45H$  & $\sum_{x \in F}
{\rm deg}(x) = 4C$ & $V-E+F-C+H$ \\ \hline
\end{tabular}
\end{small}

The relation $22E+40C=33F+45H$ is the only surprising one up to
dimension $4$.
See \cite{KnillBarycentric,KnillBarycentric2}.

\paragraph{}
A {\bf 3-manifold} is a finite simple graph $G$ for which every unit
sphere $S(v)$
is a 2-sphere. A {\bf 2-sphere} $S$ is a graph for which every unit
sphere is a 1-sphere,
a circular graph with 4 or more elements such that $S-w$ is contractible
for any $w$.
The f-vector $(f_0,f_1,f_2,f_3)$ of a 3-manifold is also denoted
$(V,E,F,C)$, where
$V$ is the number of vertices, $E$ the number of edges, $F$ the number
of triangular
faces and $C$ the number of chambers, tetrahedral sub-graphs. The
f-function
$f(t)=1+\sum_k f_k t^{k+1}$ for a 3-manifold is the quartic polynomial
$1+Vt+Et^2+Ft^3+Ct^4$.

\paragraph{}
The smallest 3-manifold is the 3-sphere $S=C_4 \oplus C_4$, where
$C_4=S_0 \oplus S_0$
is the cyclic graph with 4 vertices and $H \oplus G$ is the {Zykov join}
obtained by
taking two disjoint copies $H + G$ of $H$ and $G$, then connecting all
vertices of $H$
with all vertices of $G$. The join of a $k$-sphere and an $l$-sphere is
in general a
$k+l+1$-sphere. The $f$-functions of joins multiply $f_{H \oplus G}(t) =
f_H(t) f_G(t)$.
So, $f_{S}(t) =f_{S_4}(t)^2 = (1+4t+4t^2)^2 =  1+8t+24 t^2+32 t^3 +16
t^4$.
The {\bf Euler characteristic} $X=V-E+F-C$ can also be written as
$1-f_G(-1)$.

\paragraph{}
The Euler characteristic of any 3-manifold is zero. One can see this
from the
{\bf general Gauss-Bonnet formula}
$$ f'_G(t) = \sum_{v \in V} f_{S(v)}(t) \; . $$
See \cite{DehnSommerville,dehnsommervillegaussbonnet}.
For a $3$-manifold, where every unit sphere is a $2$-sphere, we have
$f_{S(v)}(-1)=-1$ and $f_{S(v)}$ is odd around the center $t=-1/2$, then
$f_G'(t)$ is odd around the center $t=-1/2$ and so $f_G(t)=1+\int_0^t
f'(s) \; ds$ is
even around the center $t=-1/2$ which means $f_G(-1) = f_G(0)=1$ so that
the Euler
characteristic is 0.

\paragraph{}
There are various other ways to see that an odd dimensional manifold has
zero
Euler characteristic. The most recent one is \cite{Sphereformula} for
simplicial
complexes. A very early approach is via Poincar\'e-Hopf
\cite{poincarehopf},
leading in the manifold case to an index formula \cite{indexformula}
based
on index expectation \cite{indexexpectation}.  We tried in the past to
use
index expectation to understand better the Hopf conjectures
\cite{DiscreteHopf2,DiscreteHopf} stating that for even dimensional
manifolds,
positive sectional curvature implies positive Euler characteristic.

\paragraph{}
The  Dehn-Sommerville relations for 3-manifolds is $4C=2F$.
It complements the other Dehn-Sommerville
relation $V-E+F-C=0$. Of interest is the {\bf average vertex degree}
$2E/V$ and the
{\bf average edge degree} $6C/E = 3F/E$. The product of the average
vertex degree and edge degree is
$12 C/V$ which is an {\bf average chamber density}. In the Barycentric
limit, the quantities
$E/V,C/E$ or $C/V$ become {\bf universal}. They can be read off from the
Perron-Frobenius
eigenvector of the Barycentric refinement operator $A$ (the eigenvector
to the largest
eigenvalues $24$). See \cite{ThreeTreeTheorem}.

\paragraph{}
What happens for $3$-manifolds or $3$-dimensional {\bf Dehn-Sommerville
spaces}
(manifold like spaces where each unit sphere is a Dehn-Sommerville
graph),
is that the arboricity number $E/V$ determines all
quantities $(V,E,F,C)$ up to scale:

\begin{propo}
For 3-manifolds $C/V=E/V-1$ and $F/V = 2E/V-2$.
\end{propo}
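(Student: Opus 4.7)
The plan is to derive both identities directly from the two Dehn–Sommerville relations already recorded in the excerpt for $3$-manifolds, namely $4C = 2F$ (equivalently $F = 2C$) and $V - E + F - C = 0$ (the vanishing of the Euler characteristic in odd dimension, which the text has just justified via $f_G'(t) = \sum_v f_{S(v)}(t)$ and the parity argument).

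First I would substitute $F = 2C$ into the Euler relation to get $V - E + 2C - C = 0$, i.e.\ $C = E - V$. Dividing by $V$ yields $C/V = E/V - 1$, which is the first claim. Then I would feed this back into $F = 2C$ to obtain $F = 2(E - V)$, and dividing by $V$ gives $F/V = 2E/V - 2$, which is the second claim.

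There is essentially no obstacle here: the proposition is a purely linear consequence of the two Dehn–Sommerville identities, so the only thing to watch is that the ambient hypothesis (all unit spheres are $2$-spheres, or more generally $3$-dimensional Dehn–Sommerville spaces) is exactly what is needed to have both $4C = 2F$ and $V - E + F - C = 0$ available. Since the excerpt states both relations explicitly in the tabulated Dehn–Sommerville row for dimension $3$, the proof reduces to the two-line algebraic manipulation above, and one may simply remark at the end that all four $f$-numbers are therefore determined by $E/V$ up to the common scale $V$.
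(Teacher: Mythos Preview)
Your proof is correct and follows exactly the paper's argument: use $F=2C$ and the vanishing Euler characteristic $V-E+F-C=0$ to get $C=E-V$, then $F=2(E-V)$, and divide by $V$. There is nothing to add.
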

\begin{proof}
Given $E$ and $V$, we have $E-V=F-C$ and since
$F=2C$ we have $C=E-V$ and $F=2E-2V$.
\end{proof}

This means $E/V=1+C/V$ so that
$1+C/V$ gives a lower bound on arboricity.

\paragraph{}
There is a $3$-torus which has $E/V=7$ so that the arboricity can
already be $8$
for $3$-tori. For 3-spheres, we can construct graphs of arboricity $4$.
An example with arboricity $4$ is the cross polytop with $(8, 24, 32,
16)$.

\paragraph{}
A $4$-manifold is a finite simple graph such that every unit sphere
$S(v)$ is a
$3$-sphere. Now there are two interesting Dehn-Sommerville invariants
for the
$f$-vector $(V,E,F,C,H)$, where $H$ is the number of {\bf
hyper-chambers}, $4$-dimensional
units of space. We have $5H=2C$, the usual one and the more interesting
identity
\begin{equation}
\label{Dehn-Sommerville}
 -22E+33F-40C+45H=0 \; .
\end{equation}
They are derived from 2 eigenvectors of $A^T$, where $A$
is the Barycentric refinement operator
$\left[ \begin{array}{ccccc} 1 & 1 & 1 & 1 & 1 \\ 0 & 2 & 6 & 14 & 30 \\
0 & 0 & 6 & 36 & 150 \\
                   0 & 0 & 0 & 24 & 240 \\ 0 & 0 & 0 & 0 & 120 \\
                   \end{array} \right]$.

\paragraph{}
The curvature of the $5$-manifold at a vertex $v$ is
$$ K(v) =
1-\frac{V(v)}{2}+\frac{E}{3}-\frac{F}{4}+\frac{C}{5}-\frac{H}{6} \; . $$
The equations $H=2C/5,E=(-40C+33F+45H)/22, V-E+F-C+H=2$
simplify this to $K(v)=0$ for all $v$. We see from this computation that
the {\bf zero curvature condition}
$K(x)=0$ is equivalent to the nontrivial Dehn-Sommerville equation.
When writing \cite{cherngaussbonnet}, we had known only experimentally
that for odd dimensional
manifolds, the curvature is always zero. Both index expectation and
Dehn-Sommerville then
confirmed this picture.

\bibliographystyle{plain}

\end{document}